\title{\LARGE \bf Revisiting Normalized Gradient Descent:\\ Fast Evasion of Saddle Points}
\author{\hspace{2pt}Ryan Murray\IEEEauthorrefmark{1}\IEEEauthorrefmark{3}\,, Brian Swenson\IEEEauthorrefmark{1}\IEEEauthorrefmark{2}\,,  and  Soummya Kar\IEEEauthorrefmark{2}}
\newtheorem{theorem}{Theorem}
\newtheorem{corollary}[theorem]{Corollary}
\newtheorem{definition}[theorem]{Definition}
\newtheorem{proposition}[theorem]{Proposition}
\newtheorem{remark}[theorem]{Remark}
\newtheorem{assumption}[theorem]{Assumption}
\newtheorem{example}[theorem]{Example}
   \def\vx{{\bf x}}
  \def\calL{\mathcal{L}}
\newcommand{\e}{\varepsilon}
\newcommand{\ds}{\, ds}
\newcommand{\x}{{ \bf x}}
\newcommand{\R}{\mathbb{R}}
\newcommand{\grad}{\nabla}
\newcommand{\lambdaMin}{|\lambda|_{\textup{min}}}
\newcommand{\lambdaMax}{|\lambda|_{\textup{max}}}
\newcommand{\ddt}{\frac{d}{dt}}
\newcommand{\cl}{\textup{cl}}
\newcommand{\diag}{\textup{diag}}
\begin{document}

\maketitle

\renewcommand{\thefootnote}{\fnsymbol{footnote}}

\footnotetext[1]{These authors contributed equally.}
\footnotetext[2]{Department of Electrical and Computer Engineering,
Carnegie Mellon University, Pittsburgh, PA, USA (brianswe@ece.cmu.edu, soummyak@andrew.cmu.edu).}
\footnotetext[3]{Department of Mathematics, Pennsylvania State University, State College, PA, USA (rwm22@psu.edu).}

\renewcommand{\thefootnote}{\arabic{footnote}}

\thispagestyle{empty}
\begin{abstract}
The note considers \emph{normalized gradient descent} (NGD), a natural modification of classical gradient descent (GD) in optimization problems. A serious shortcoming of GD in non-convex problems is that GD may take arbitrarily long to escape from the neighborhood of a saddle point. This issue can make the convergence of GD arbitrarily slow, particularly in high-dimensional non-convex problems where the relative number of saddle points is often large.
The paper focuses on continuous-time descent. It is shown that, contrary to standard GD, NGD escapes saddle points ``quickly.'' In particular, it is shown that (i) NGD ``almost never'' converges to saddle points and (ii) the time required for NGD to escape from a ball of radius $r$ about a saddle point $x^*$ is at most $5\sqrt{\kappa}r$, where $\kappa$ is the condition number of the Hessian of $f$ at $x^*$. As an application of this result, a global convergence-time bound is established for NGD under mild assumptions.
\end{abstract}

\section{Introduction}
Given a differentiable function $f:\R^d\to\R$, the canonical first-order optimization procedure is the method of gradient descent (GD).
In continuous-time, GD is defined by the differential equation
\begin{equation} \label{eqn:grad-dynamics}
\dot \vx = -\grad f(\vx)
\end{equation}
and in discrete-time, GD is defined by the difference equation
\begin{equation} \label{eq_GD_DE}
x_{n+1} = x_n - \alpha_n\grad f(x_n),
\end{equation}
where $\{\alpha_n\}_{n\geq 1}$ is some step-size sequence. The discrete-time GD process \eqref{eq_GD_DE} is merely a \emph{sample and hold} (or \emph{Euler}) discretization of the differential equation \eqref{eqn:grad-dynamics}, and the properties of solutions of \eqref{eqn:grad-dynamics} and \eqref{eq_GD_DE} are closely related \cite{khalil1996noninear,benaim1996dynamical,stoer2013introduction}.
Owing to their simplicity and ease of implementation, GD and related first-order optimization procedures are popular in practice, particularly in large-scale problems where second-order information such as the Hessian can be costly to compute \cite{boyd2004convex}. When the objective function $f$ is convex, GD can be both practical and effective as an optimization procedure. However, when $f$ is non-convex, GD can perform poorly in practice, even when the goal is merely to find a local minimum.

The underlying issue is the presence of saddle points in non-convex functions; the gradient $\grad f(x)$ vanishes near saddle points, which causes GD to ``stall'' in neighboring regions \cite{dauphin2014identifying} (see also Section \ref{sec:saddle-points-GD}). This both slows the overall convergence rate and makes detection of local minima difficult. The detrimental effects of this issue become particularly severe in high-dimensional problems where the number of saddle points may proliferate. Recent work \cite{dauphin2014identifying} showed that in some high-dimensional problems of interest, the number of saddle points increases exponentially relative to the number of local minima, which can dramatically increase the time required for GD to find even a local minimum.

Since first-order dynamics such as GD tend to be relatively simple to implement in large-scale applications, there has been growing interest in understanding the issue of saddle-point slowdown in non-convex problems and how to overcome it \cite{dauphin2014identifying,ge2015escaping,JordanStableManifold,JordanPNAS,AartiSlowdown,reddi2017saddles}. For example, there has been a surge of recent research on this topic in the machine learning community where large-scale non-convex optimization and first-order methods are of growing importance in many applications \cite{choromanska2015loss,sun2016geometric,dauphin2014identifying,sun2015complete,ge2015escaping}.

One intuitively simple method that has been proposed to mitigate this issue is to consider \emph{normalized} gradient descent (NGD). In continuous time, NGD (originally introduced in \cite{cortes2006}) is defined by the differential equation
\begin{equation}\label{eqn:normalized-dynamics}
\dot\vx = -\frac{\grad f(\vx)}{\|\grad f(\vx)\|}
\end{equation}
and in discrete time, NGD (originally introduced in \cite{Nesterov1984NGD}) is defined by the difference equation
\begin{equation}\label{eq_NGD_DE}
x_{n+1} = x_{n} - \alpha_n\frac{\grad f(x_n)}{\|\grad f(x_n)\|},
\end{equation}
where $\{\alpha_n\}_{n\geq 1}$ is some step-size sequence. As with GD,  discrete-time NGD \eqref{eq_NGD_DE} is merely a sample and hold discretization of its continuous-time counterpart \eqref{eqn:normalized-dynamics}.

The normalized gradient $\frac{\grad f(\vx)}{\|\grad f(\vx)\|}$ preserves the direction of the gradient but ignores magnitude. Because $\frac{\grad f(\vx)}{\|\grad f(\vx)\|}$ does not vanish near saddle points, the intuitive expectation (corroborated by evidence \cite{Levy}) is that NGD should not slow down in the neighborhood of saddle points and should therefore escape ``quickly.''

In this note, our goal is to elucidate the key differences between GD and NGD and, more importantly, give rigorous theoretical justification to the intuition that NGD ``escapes saddle points quickly.'' We will focus, in this work, on continuous-time descent. From the control perspective this may be seen as extending the seminal work of \cite{cortes2006} by characterizing saddle-point behavior of NGD. From the optimization perspective, focusing on continuous-time dynamics allows us to more easily characterize the fundamental properties of NGD using a wealth of available analysis tools and follows in the spirit of recent works studying optimization processes through the lens of differential equations \cite{su2014differential,krichene2015accelerated}.

We have three main results, which we state informally here:

\textbf{Main Result 1} (Theorem \ref{prop:stable-manifold}): Our first main result is to show that NGD can only converge to saddle points from a set of initial conditions with measure zero.
We note that this result implies that, generically, NGD only converges to minima of $f$.\footnote{When we say that a property holds generically for an ODE, we mean that it holds from all initial conditions except, possibly, some set with Lebesgue measure zero.}
However, it provides no guarantees about convergence time or saddle-point escape time. (Indeed, this same result is known to hold for GD, which performs poorly in practice due to saddle-point slowdown.)

This result follows as a relatively straightforward application of the stable-manifold theorem from classical ODE theory (see Proposition \ref{prop:arc-length}, Theorem \ref{prop:stable-manifold} and proofs thereof).

\textbf{Main Result 2} (Theorem \ref{thm:main-thm}): Our second
main result is to show that NGD always escapes from saddle points ``quickly.'' More precisely, we show that the maximum amount of time a trajectory of NGD can spend in a ball of radius $r>0$ about a (non-degenerate) saddle point $x^*$ is $5\sqrt{\kappa}r$, where $\kappa$ is the condition number of the Hessian of $f$ at $x^*$ (see Theorem \ref{thm:main-thm}).\footnote{In Theorem \ref{thm:main-thm} we show a slightly more refined result than this; namely, we show that the time spent in the $r$-ball can be upper bounded by $C\sqrt{\kappa}r$, where $C$ is any constant strictly greater than 4. For clarity of presentation we simply fix the constant to be 5 here. See Remark \ref{remark:constant1} for more details.}

We note that this result is independent of the dimension of the problem.
In contrast to this, the saddle-point escape time of GD (i.e., the maximum amount of time a trajectory of GD may take to leave a ball of radius $r$ about a saddle point) is always \emph{infinite}, independent of the function $f$, the particular saddle point $x^*$, or the dimension of the problem. (See Theorem \ref{thm:main-thm} for a precise definition of saddle-point escape time and Remark \ref{remark:GD-escape-time} for a discussion of GD saddle-point escape time.) This is precisely the issue which causes GD to perform poorly in high-dimensional problems with many saddle points.

While a characterization of saddle-point escape time such as Theorem \ref{thm:main-thm} is essential in understanding how NGD can mitigate the problem of saddle-point slowdown in high dimensional optimization \cite{dauphin2014identifying}, the issue is challenging to study due to the discontinuity in the right-hand side of \eqref{eqn:normalized-dynamics}. In particular, the system is not amenable to classical analytical techniques.
We prove Theorem \ref{thm:main-thm} by studying the rate of ``potential energy dissipation'' (to use an analogy from physics) of NGD near saddle points. The methods used are flexible and can be applied to a variety of discontinuous dynamical systems (see Remark \ref{remark-proof-techniques} and proof of Proposition \ref{prop:time-bound}).

\textbf{Main Result 3} (Corollary \ref{cor-finite-time}):  As our final main result, using the local saddle-point analysis noted above (Theorem \ref{thm:main-thm}) we provide a simple global bound on the convergence time of NGD under mild assumptions on $f$.\footnote{We note that classical first-order methods converge to a local minimum over an infinite time horizon. Continuous-time NGD, on the other hand, converges in finite time (cf. \cite{cortes2006}). Hence the bound we provide concerns the convergence time rather than the convergence rate.}

\textbf{Literature Review}: Continuous-time NGD dynamics were
first introduced by Cortes \cite{cortes2006} in the context of distributed multi-agent coordination. In \cite{cortes2006} it was shown that NGD converges to critical points of $f$ in finite time and this result was used to develop distributed gradient coordination algorithms that achieve a desired task in finite time. Our results differ from \cite{cortes2006} primarily in that we characterize the saddle-point behavior of NGD, including demonstrating non-convergence to saddle points and providing a strong characterization of saddle-point escape time.
Furthermore, our results differ from \cite{cortes2006} in that (i) our results show that NGD almost always converges to local minima rather than just the set of critical points of $f$ and
(ii) \cite{cortes2006} considered only local bounds on the convergence time of NGD to local minima. Because we characterize the saddle point behavior of NGD, our results enable \emph{global} bounds on the convergence time of NGD to minima of non-convex functions (see Corollary \ref{cor-finite-time}).

Discrete-time NGD was first introduced by Nesterov \cite{Nesterov1984NGD}
and variants have received increasing attention in the optimization and machine learning communities \cite{kiwiel2001convergence,konnov2003convergence,hazan2015beyond,Levy}.
The problem of coping with saddle points in non-convex optimization has received significant recent attention (see \cite{dauphin2014identifying,ge2015escaping,JordanStableManifold,JordanPNAS,AartiSlowdown,reddi2017saddles} and references therein). Of particular relevance to the present work are results dealing with first-order methods. Recent work along these lines includes the following. The work \cite{JordanStableManifold} shows that the classical stable manifold theorem implies that gradient descent only converges to minima. The work \cite{AartiSlowdown} shows that, even with random initialization, discrete-time
GD can take exponential time to escape saddle points. The work \cite{ge2015escaping} showed that noisy discrete-time GD converges to a local minimum in a polynomial number of iterations.
Our work differs from \cite{ge2015escaping} primarily in that we investigate the role of normalization of the dynamics (rather than noise injection) as a means of accelerating escape from saddle points.

The use of normalization in GD has also been studied in \cite{Levy} where it was shown that discrete-time NGD with noise injection can outperform GD with noise injection \cite{ge2015escaping} in terms of dimensional dependence and the number of iterations required to reach the basin of a local minimum. Numerical simulations of discrete-time noisy NGD and comparisons with discrete-time noisy GD in several problems of interest were also performed in \cite{Levy}. Our work differs from \cite{Levy} in that we study the continuous-time deterministic NGD dynamics \eqref{eqn:normalized-dynamics} (which may be viewed as the mean dynamics of the noise-injected discrete-time NGD \cite{Levy} as the step size is brought to zero), we characterize the stable-manifold for these dynamics near saddle points, and we explicitly characterize the saddle-point escape time.

The work \cite{jin2017escape} improved on the dimensional dependence of the results of \cite{ge2015escaping} and \cite{Levy}, showing that GD with noise injection can reach the basin of a local minimum in a number of iterations with only polylog dependence on dimension.
Our work differs from \cite{jin2017escape} in that we again study the underlying continuous dynamics and perform an explicit local analysis of the dynamics near saddle points. We demonstrate that the local saddle point escape time of NGD can be bounded independent of dimension (Theorem \ref{thm:main-thm}). Moreover, because we show that NGD is a path-length reparametrization of GD, our results also have implications for classical GD. In particular, Theorem \ref{thm:main-thm} together with Proposition \ref{prop:arc-length} shows that a classical GD trajectory can have at most length $5\sqrt{\kappa} r$ (where $\kappa$ is the condition number of the Hessian of $f$ at $x^*$) before it must exit a ball of radius $r$ about a saddle point.


\textbf{Organization}: Section \ref{sec:notation} sets up notation. Section \ref{sec:examples} presents a simple example illustrating the salient features of GD and NGD near saddle points. Section \ref{sec:struct_properties} studies the structural relationship between GD and NGD and presents Theorem \ref{prop:stable-manifold} which shows generic non-convergence to saddle points.  Section \ref{sec:main_result} presents Theorem \ref{thm:main-thm} which gives the saddle-point escape-time bound for NGD. Section \ref{sec:global-bound} presents a simple global convergence-time bound for NGD (Corollary \ref{cor-finite-time}). The proofs of all results are deferred to Section \ref{sec:proofs}.

\section{Preliminaries} \label{sec:notation}
Suppose $f:\R^d\to \R$ is a twice differentiable function. We use the following notation.
\begin{itemize}
\item $\grad f(x)$ denotes the gradient of $f$ at $x$
\item $D^2f(x)$ denotes the Hessian of $f$ at $x$
\item Given a set $S\subset \R^d$, the closure of $S$ is given by $\cl(S)$ and the boundary of $S$ is given by $\partial S$
\item $\calL^d$, $d\geq 1$ denotes the $d$-dimensional Lebesgue measure
\item $B_r(x)$ denotes the ball of radius $r$ about $x\in \R^d$
\item $\| \cdot \|$ denotes the Euclidean norm
\item $d(\cdot,\cdot)$ denotes Euclidean distance
\item $\dot \vx$ is shorthand for $\ddt \vx(t)$
\item Given $C>0$, $|D^3 f(x)|<C$ means that $\vert\frac{\partial ^3 f(x)}{\partial x_i \partial x_j \partial x_k}\vert < C$, $i,j,k=1,\ldots,d$
\item For $A\in \R^{n\times n}$, $\sigma(A)$ denotes the spectrum of $A$
\item $\lambdaMin(A):= \min\{|\lambda|:\lambda\in\sigma(A)\}$
\item  $\lambdaMax(A):= \max\{|\lambda|:\lambda\in\sigma(A)\}$
\item The \emph{condition number} of $A$ is given by $\frac{\lambdaMax(A)}{\lambdaMin(A)}$
\item $\diag(\lambda_1,\ldots,\lambda_d)$ gives a $d\times d$ matrix with $\lambda_1,\ldots,\lambda_d$ on the diagonal
\end{itemize}


We say that a saddle point $x^*$ of $f$ is \emph{non-degenerate} if $D^2f(x^*)$ is non-singular.

For $k\geq 1$, let $C^k$ denote the set of all functions from $\R^d$ to $\R$ that are $k$-times continuously differentiable.
Unless otherwise specified, we will assume the following throughout the paper.
\begin{assumption} \label{a:twice-differentiable}
The objective function $f$ is of class $C^2$.
\end{assumption}

We say that a continuous mapping $\vx:I \to R^d$, over some interval $I =[0,T)$, $0 < T \leq \infty$, is a solution to an ODE with initial condition $x_0$ if $\vx \in C^1$, $\vx$ satisfies the ODE for all $t\in I$, and $\vx(0) = x_0$.


Under assumption \ref{a:twice-differentiable}, there exists a unique solution to \eqref{eqn:grad-dynamics} which exists on the interval $I = \R$ for every initial condition.
A solution $\vx$ to \eqref{eqn:normalized-dynamics} with initial condition $x_0$ satisfying $\nabla f(x_0) \not = 0$, will have a unique solution on some \emph{maximal interval of existence} $[0,T)$, where $T$ is dependent on $x_0$ (see \cite{Perko_ODE} for a formal definition of the maximal interval of existence). Practically, for solutions of \eqref{eqn:normalized-dynamics} the maximal interval of existence is the maximal time interval for which a solution $\vx$ does not intersect with a critical point of $f$.
When we refer to a solution of \eqref{eqn:normalized-dynamics} we mean the solution defined over its maximal interval of existence.

\begin{remark}[Fillipov solutions]
We note that one can handle the discontinuity in the right hand side of \eqref{eqn:normalized-dynamics} by considering solutions of the associated Fillipov differential inclusion \cite{Filippov,cortes2006}.
In order to keep the presentation simple and broadly accessible we have elected to avoid this approach and instead consider solutions only on intervals on which they are classically defined. Practically, the main differences between the two approaches are that (i) solutions in the classical sense cease to exist when they reach a saddle point or local minimum whereas Fillipov solutions remain well defined at these points, and (ii) Fillipov solutions may not be differentiable at times when solutions reach or depart from critical points. In particular, Fillipov solutions to \eqref{eqn:normalized-dynamics} may sojourn indefinitely at saddle points (and local maxima) of $f$ and remain at non-degenerate minima of $f$ once reached.
Our results and analysis extend readily to solutions in this sense modulo minor technical modifications.
\end{remark}

%

The following two definitions are standard from classical ODE theory.
\begin{definition}[Orbit of an ODE] \label{def:orbit}
  Let $x(t)$ be the solution of some ODE on the interval $[0,T)$. Assume that $x(0) = x_0$ and that $[0,T)$ is the maximal interval on which $x(t)$ is the unique solution of the ODE with initial value $x_0$ (here $T=\infty$ is permitted). Then the \emph{orbit} corresponding to the initial condition $x_0$ is defined to be the set $\gamma_{x_0}^+ := \{x\in \R^d:~ \vx(t)= x \mbox{ for some } t \in [0,T)\}$.
\end{definition}
We note that $\gamma_{x_0}^+$ in the above definition is often referred to as a \emph{forward orbit}; to simplify nomenclature, we will refer to it simply as an orbit.

Given a differentiable curve $\vx: [0,T)\to \R^d$, the \emph{arc length} of $\vx$ at time $t< T$ is given by $L(t):= \int_{0}^t |\dot \vx(s)|\ds$, and we let $L(T) := \lim_{t\to T} L(t)$.
\begin{definition} [Arc-Length Reparametrization] \label{def:arc-reparam}
  Suppose $\vx:[0,T) \to\R^d$ is a differentiable curve in $\R^d$ with arc length at time $t$ given by $L(t)$. We say that $\tilde \vx: I \to \R^d$, $I=[0,L(T))$ is an \emph{arc-length reparametrization} of $\vx(t)$ if there holds $\vx(t) = \tilde \vx(L(t))$ for all $t\in [0,T)$.
\end{definition}

We say that a property holds for \emph{almost every} element in a set $A\subseteq \R^d$, $d\geq 1$, if the subset of $A$ where the property fails to hold has $\calL^d$-measure zero. Likewise, we say that a property holds for almost every solution of an ODE if the property holds for solutions starting from almost every initial condition.


\section{Saddle-Point Behavior of GD and NGD: Examples and Intuition} \label{sec:examples}

\subsection{Saddle Points and GD} \label{sec:saddle-points-GD}
The following simple example illustrates the behavior of GD near saddle points.
\begin{example} \label{example-GD}
Suppose the objective function is given by
\begin{equation} \label{eqn:example-f}
  f(x) = \frac{1}{2} x^T A x, \quad\quad A =
\begin{pmatrix}
 1 & 0\\
 0 & -1
\end{pmatrix}
\end{equation}
and note that the origin is a saddle point of $f$. The associated GD dynamics \eqref{eqn:grad-dynamics} reduce to a simple linear system of the form
\begin{equation} \label{eqn:lin-system}
\ddt \vx(t) = -A\vx(t)
\end{equation}
with solution $\vx(t) = e^{-At}x_0$, for initial condition $\vx(0) = x_0 \in \R^2$.

By classical linear systems theory we see that solutions of this system will only converge to the origin if they start with initial conditions in the stable eigenspace of $-A$, which is given by $E_s := \{x=(x^1,x^2)\in\R^2: x^2 = 0\}$). Note that this is a set of initial conditions with Lebesgue measure zero.

Let $r>0$ and consider the following question: What is the maximum amount of time that a solution of \eqref{eqn:lin-system} may spend in a ball of radius $r>0$ about the origin?
It is straightforward to verify that trajectories not converging to $0$ may take arbitrarily long to leave $B_r(0)$, and so the time it could potentially take to escape saddle points is unbounded. Indeed, note that for $\e \in (0,r)$, a trajectory of \eqref{eqn:lin-system} starting on $\partial B_r(0)$ which enters $B_\epsilon(0)$ must spend at least time $-r\log(\e)$ inside the $r$-ball before it may enter the $\e$ ball.
\end{example}

These same basic properties generalize to GD in higher dimensional systems: Solutions of GD may only converge to a saddle point from a set of initial conditions with measure zero, but the time required to escape neighborhoods of the saddle is always infinite. This is made precise in the following remark.
\begin{remark}[Saddle-Point Escape Time of GD] \label{remark:GD-escape-time}
Informally, given a function $f$, a saddle point $x^*$ of $f$, and an $r>0$
we refer to the ``saddle-point escape time'' of an optimization process as the maximum amount of time a trajectory which does not converge to $x^*$ may spend in a ball of radius $r$ about $x^*$. In GD, the saddle point escape time is always infinite. That is, for arbitrary objective function $f$, saddle point $x^*$, and radius $r>0$ there holds
  \begin{equation}
    \sup_{\substack{x_0\in \partial B_r(x^*)\\ x^* \notin \cl(\gamma_{x_0}^+)}}  \calL^1\bigg( \Big\{ t\in [0,\infty):~ \vx_{x_0}(t)\in B_r(x^*) \Big\} \bigg) = \infty,
\end{equation}
where $\vx_{x_0}$ is the solution of \eqref{eqn:grad-dynamics} with initial condition $x_0$.
\end{remark}
This is precisely the issue which causes GD to perform poorly in high-dimensional problems with many saddle points. In this paper we will see that NGD significantly mitigates this issue---rather than having an infinite saddle-point escape time, the saddle point escape time of NGD is at most $5\sqrt{\kappa}r$, where $\kappa$ is the condition number of $D^2 f(x^*)$.

\subsection{Saddle Points and NGD} \label{sec:examples-NGD}
We will now consider the behavior of NGD near the saddle point in the above example.

In order to better understand this issue, it is helpful to characterize the relationship between GD and NGD. In Section \ref{sec:struct_properties} we will see that GD and NGD are closely linked---the two systems are ``topologically equivalent'' \cite{Perko_ODE} and solutions of NGD are merely arc-length reparameterizations of GD solutions (see Definition \ref{def:arc-reparam}). In practical terms this means that if one considers orbits of NGD and GD starting from the same initial condition $x_0\in\R^d$, the orbits generated by the two systems are identical (see Definition \ref{def:orbit}). The solutions of each system only vary in how quickly they move along the common orbit. In particular, since NGD always ``moves with speed 1'' (i.e., $\|\dot \vx(t)\| = 1, ~\forall t\geq 0$) the length of an arc generated by NGD up to time to time $t$ is precisely $t$ (this is what it means to be an arc-length reparameterization). As an important result of this characterization, we will see that NGD ``almost never'' converges to saddle points (see Theorem \ref{prop:stable-manifold}).

While a solution of GD may move arbitrarily slowly as it passes near a saddle point, a solution of NGD starting at the same initial condition will move along the same orbit with constant speed, not slowing near the saddle point. This is illustrated in Fig. \ref{fig:NGD_v_GD}.

Consider NGD with $f$ as defined in Example \ref{example-GD} (see \eqref{eqn:example-f}). Given the simple linear structure of the corresponding GD ODE \eqref{eqn:lin-system} it is straightforward to verify that the arc-length of any trajectory of GD (or equivalently NGD) intersecting $B_r(0)$ is upper bounded by $2r$ and hence the maximum time a trajectory of NGD may spend in $B_r(0)$ is $2r$ (see Fig. \ref{fig:NGD_v_GD}).

This simple example may be generalized to higher dimensions. Let $f:\R^d\to \R$, $d\geq 2$ be given by $f(x) = x^T Ax$, with $A = \diag(\lambda_1,\ldots,\lambda_d)$ with $|\lambda_i|=1$ for all $i=1,\ldots,d$, and at least one $\lambda_i>0$ and one $\lambda_i<0$. Given the simple structure of the corresponding GD ODE $\dot \vx = -A\vx$, it is straightforward to show that the arc-length of any trajectory of GD intersecting $B_r(0)$ (and hence the amount of time spent by NGD in $B_r(0)$) is upper bounded by $2r$, independent of the dimension $d$.

Note that in this example, the condition number of $D^2 f(0)$ is 1. In general, as the condition number increases, the time spent by NGD in $B_r(0)$ may increase. Theorem \ref{thm:main-thm} captures this relationship for general $f$ (satisfying Assumption \ref{a:twice-differentiable}).


\begin{remark}
We note that the bound that will be established in Theorem \ref{thm:main-thm} is conservative. In particular, suppose $f:\R^d\to\R$ is quadratic of the form $f(x) = x^TAx$, with $A\in\R^{d\times d}$ diagonal and non-singular. Then one can show that time spent by a trajectory of NGD in $B_r(0)$ is at most $2\sqrt{d}r$. This bound holds even as the condition number of $D^2 f(0)$ is brought to $\infty$.\footnote{This is shown by bounding the arc length of the corresponding linear GD ODE $\dot \vx = -A\vx$. Intuitively, if $A$ is well conditioned, then trajectories of the ODE passing near 0 travel along a ``direct route'' to and away from 0. If $A$ is ill conditioned, then trajectories of the ODE travel a ``Manhattan route'' to and away from 0, with movement tangential to the stable eigenspace of $A$ occurring along only one stable eigenvector at a time.} Thus, while an ill-conditioned saddle point can slow the escape time of NGD, this example suggests that in the worst case as the condition number is brought to $\infty$, the time spent by NGD in $B_r(x^*)$ about a saddle point $x^*$ can be bounded by $C\sqrt{d}r$, where $C>0$ is some universal constant independent of dimension and condition number. An in-depth investigation of this issue is outside the scope of this note.
\end{remark}

%

\begin{figure}
    \centering
    \begin{subfigure}[b]{0.2\textwidth}
        \includegraphics[width=\textwidth]{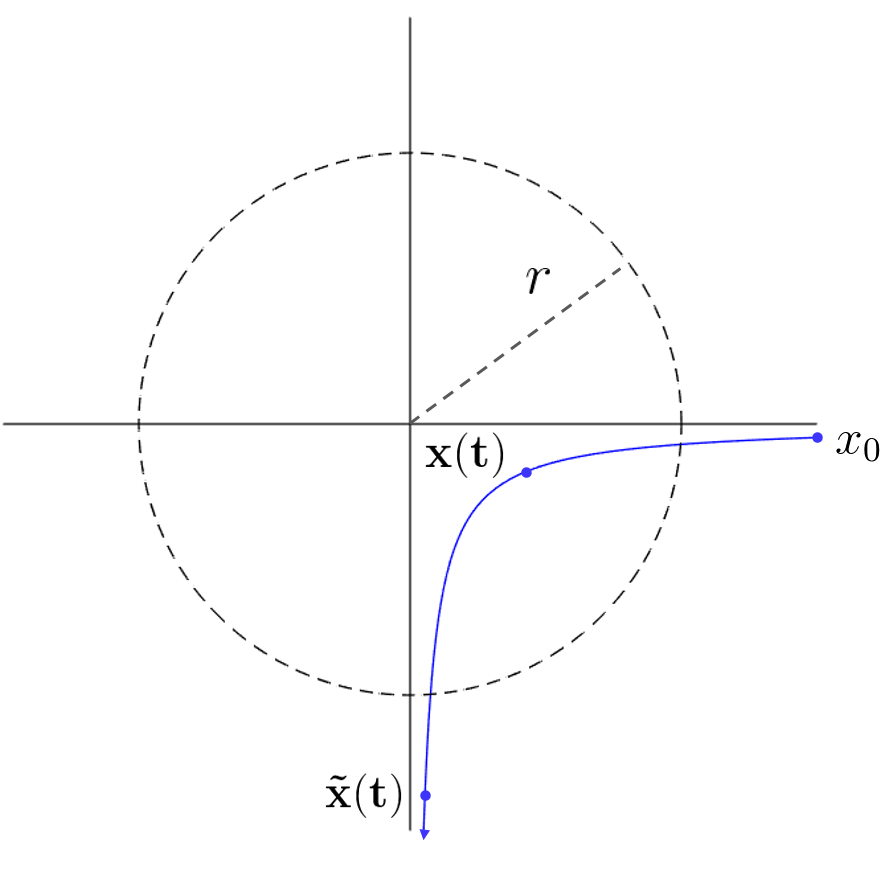}
        \caption{ }
        \label{fig:gull}
    \end{subfigure}
    ~~~ 
    \begin{subfigure}[b]{0.19\textwidth}
        \includegraphics[width=\textwidth]{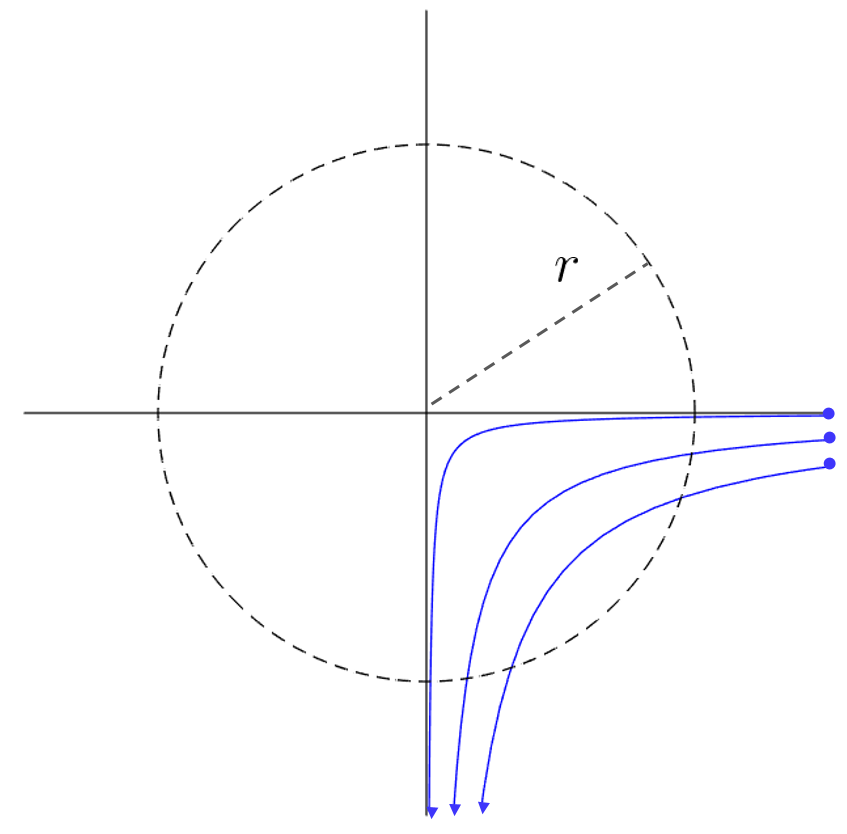}
        \caption{}
        \label{fig:mouse}
    \end{subfigure}
    \caption{\small (a) Common orbit shared by the solutions of GD and NGD starting at the same initial condition $x_0$ with the objective function given by \eqref{eqn:example-f}. At time $t$, trajectory of GD (given by x(t)) stalls near the saddle point while trajectory of NGD (given by ~x(t)) moves along the same orbit with constant speed without slowing down near the saddle point. (b) As $x_0$ approaches the stable eigenspace (the horizontal axis) the length of the orbit inside the ball approaches $2r$.}\label{fig:NGD_v_GD}
\end{figure}

\section{NGD: Structural Properties and Generic Convergence to Local Minima} \label{sec:struct_properties}
The following proposition establishes the basic structural relationship between GD and NGD.
\begin{proposition}\label{prop:arc-length}
 Let $\vx(t)$ and $\tilde \vx(t)$ be solutions of \eqref{eqn:grad-dynamics} and \eqref{eqn:normalized-dynamics} respectively, with the same initial condition $x_0$, over maximal intervals $[0,T)$ and $[0,\tilde T)$ respectively. Then $\tilde \vx(t)$ is an arc length reparametrization of $\vx(t)$, and $\tilde \vx(t) = \vx(s(t))$ for some strictly increasing function $s(t)$, with $s(0) = 0$ and $s(\tilde{T}) = T$.
\end{proposition}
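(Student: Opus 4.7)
\textbf{Proof Plan for Proposition \ref{prop:arc-length}.} The plan is to build the reparametrization explicitly via the arc-length function of the GD trajectory, verify by a direct chain-rule computation that the reparametrized curve solves NGD, and then use uniqueness of solutions together with maximality to identify the two trajectories and match their intervals.

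First I would dispose of the trivial case by assuming $\nabla f(x_0)\neq 0$ (otherwise the maximal interval $[0,\tilde T)$ of NGD is empty and there is nothing to prove). Then I would observe that the gradient of $f$ never vanishes along the GD trajectory: if $\nabla f(\vx(s_0))=0$ for some $s_0\in[0,T)$, then the constant curve $\vx\equiv \vx(s_0)$ is a solution of \eqref{eqn:grad-dynamics} agreeing with $\vx$ at $s_0$, and uniqueness of GD solutions (guaranteed by Assumption~\ref{a:twice-differentiable}) would force $\vx(0)=\vx(s_0)$, contradicting $\nabla f(x_0)\neq 0$. Consequently the arc-length function
\begin{equation}
L(\sigma)\;:=\;\int_0^\sigma \|\grad f(\vx(r))\|\,dr, \qquad \sigma\in[0,T),
\end{equation}
is strictly increasing and $C^1$, with $L(0)=0$ and $L(T)=\lim_{\sigma\to T}L(\sigma)\in(0,\infty]$.

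Next I would let $s:[0,L(T))\to[0,T)$ denote the $C^1$ inverse of $L$, so that $\dot s(t)=\|\grad f(\vx(s(t)))\|^{-1}$, and define $\vy(t):=\vx(s(t))$ on $[0,L(T))$. A one-line chain-rule computation then gives
\begin{equation}
\dot{\vy}(t)\;=\;\dot s(t)\,\dot{\vx}(s(t))\;=\;\frac{-\grad f(\vx(s(t)))}{\|\grad f(\vx(s(t)))\|}\;=\;-\frac{\grad f(\vy(t))}{\|\grad f(\vy(t))\|},
\end{equation}
so $\vy$ solves \eqref{eqn:normalized-dynamics} on $[0,L(T))$ with $\vy(0)=x_0$. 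Moreover $L$ coincides with the arc-length of $\vx$ in the sense of Definition~\ref{def:arc-reparam}, so $\vy$ is an arc-length reparametrization of $\vx$.

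Finally I would match the maximal intervals via uniqueness. On any subinterval of $[0,L(T))\cap[0,\tilde T)$ the normalized right-hand side is locally Lipschitz (since the gradient does not vanish along either curve), so the standard Picard--Lindel\"of argument yields $\vy\equiv\tilde\vx$; in particular $\tilde T\geq L(T)$. For the reverse inclusion, as $t\to L(T)^-$ we have $s(t)\to T^-$, so $\tilde \vx(t)=\vx(s(t))$ either escapes every compact set or approaches a critical point of $f$ (depending on how the GD solution exits its maximal interval), and in either case the maximal interval of NGD cannot be extended past $L(T)$. Hence $\tilde T=L(T)$ and $s(\tilde T)=T$, completing the proof.

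The main obstacle will not be the chain-rule identification of NGD --- that is immediate --- but rather the careful bookkeeping required to show that the maximal intervals genuinely match, especially in boundary cases where $T=\infty$ but $L(T)<\infty$ (trajectory of finite length converging to a critical point), or $T<\infty$ with $L(T)=\infty$ (finite-time blow-up of GD). Treating these cases uniformly via the fact that $s:[0,L(T))\to[0,T)$ is a $C^1$ bijection, combined with local uniqueness of NGD on its maximal interval, is the key clean step.
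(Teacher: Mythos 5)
Your proposal is correct and follows essentially the same route as the paper: reparametrize the GD trajectory by arc length, verify via the chain rule that the reparametrized curve satisfies \eqref{eqn:normalized-dynamics}, and conclude by uniqueness of classical solutions. The additional care you take with the non-vanishing of $\nabla f$ along the trajectory and the matching of maximal intervals is detail the paper leaves implicit, not a different method.
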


This result means that (classical) solutions of \eqref{eqn:grad-dynamics} and \eqref{eqn:normalized-dynamics} starting at the same initial condition have identical orbits (see Definition \ref{def:orbit}); the solutions only differ in the speed with which they move along the common orbit.\footnote{In other words, the dynamical systems defined by  \eqref{eqn:grad-dynamics} and \eqref{eqn:normalized-dynamics} are topologically equivalent \cite{Perko_ODE} with the concomitant homeomorphism given by the identity.}

The following result shows that NGD may only converge to non-degenerate saddle points from a measure-zero set of initial conditions. Part (i) of the proposition considers a slightly weaker condition than non-degeneracy as discussed in Section \ref{sec:notation}. In particular, we will require that at least one eigenvalue of $D^2 f(x^*)$ be negative. Saddle points satisfying this condition are sometimes referred to in the literature as \emph{rideable} or \emph{strict} saddle points \cite{ge2015escaping,sun2015nonconvex}.

%

\begin{theorem} [Non-Convergence to Saddle Points] \label{prop:stable-manifold} $~$\\
  \noindent (i) Let $x^*$ be a saddle point of $f$ such that there exists a $\lambda \in \sigma(D^2 f(x^*))$ with $\lambda < 0$. Then solutions to \eqref{eqn:normalized-dynamics} can only reach or converge to $x^*$ from a set of initial conditions with Lebesgue measure zero.\\
  \noindent (ii) Suppose that each saddle point of $f$ is non-degenerate. Then the set of initial conditions from which solutions to \eqref{eqn:normalized-dynamics} reach or converge to a saddle point has Lebesgue measure zero.
\end{theorem}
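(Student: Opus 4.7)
The plan is to leverage Proposition \ref{prop:arc-length} to transfer the problem from NGD to GD, and then invoke the classical (center-)stable manifold theorem from ODE theory; the conclusion will then follow from a codimension count combined with a countable-union-of-null-sets step.

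For part (i), I would first show that the set of initial conditions from which NGD reaches or converges to $x^*$ is contained in the set from which the corresponding GD trajectory converges to $x^*$ as $t\to\infty$. Suppose the NGD trajectory $\tilde\vx$ from $x_0$ approaches $x^*$ over its maximal interval $[0,\tilde T)$. By Proposition \ref{prop:arc-length}, $\tilde\vx(t)=\vx(s(t))$, where $\vx$ is the GD trajectory from $x_0$ and $s$ is strictly increasing with $s(\tilde T)=T$. Assumption \ref{a:twice-differentiable} together with uniqueness for the GD ODE force $T=\infty$ (a GD trajectory starting away from a critical point cannot reach one in finite time), so $\vx(t)\to x^*$ as $t\to\infty$. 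Next, I would apply the center-stable manifold theorem at $x^*$ to the GD vector field $-\nabla f$, whose Jacobian at $x^*$ is $-D^2 f(x^*)$. The hypothesis yields at least one positive eigenvalue of $-D^2 f(x^*)$, hence a strictly unstable direction; the theorem then produces a local invariant $C^1$ manifold $W^{cs}_{\mathrm{loc}}$ through $x^*$ whose codimension equals the number of strictly unstable eigenvalues (so at least $1$) and which contains every forward orbit that remains in a neighborhood of $x^*$ — in particular, every orbit converging to $x^*$.

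To finish part (i), I would express the global stable set $\{x_0:\vx_{x_0}(t)\to x^*\}$ as $\bigcup_{n\in\mathbb{N}}\phi_{-n}(W^{cs}_{\mathrm{loc}})$, where $\phi_t$ is the (globally defined, $C^1$) GD flow. Each $\phi_{-n}$ is a diffeomorphism, so it sends the codimension-$\geq 1$ submanifold $W^{cs}_{\mathrm{loc}}$ to another $\calL^d$-null set; a countable union of null sets is null. Part (ii) then follows quickly. Each non-degenerate saddle $x^*$ has $\nabla f(x^*)=0$ with $D(\nabla f)(x^*)=D^2 f(x^*)$ invertible, so by the inverse function theorem $x^*$ is isolated in the critical set, and hence the set of non-degenerate saddles is at most countable. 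Non-degeneracy together with the fact that $x^*$ is not a local minimum forces $D^2 f(x^*)$ to have a strictly negative eigenvalue, placing us in the setting of (i); a countable union of null sets completes the argument.

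The main technical obstacle is invoking the correct version of the stable manifold theorem. Since part (i) assumes only \emph{one} negative Hessian eigenvalue, $D^2 f(x^*)$ may still have zero or positive eigenvalues, ruling out the simplest hyperbolic statement; the appropriate tool is the center-stable manifold theorem, which absorbs the center directions into a single smooth invariant manifold, leaving only the codimension count (the number of strictly unstable linear directions) to drive the Lebesgue-null conclusion. A secondary, minor subtlety is carefully converting NGD's possible finite-time arrival at $x^*$ into GD's infinite-time convergence via the reparametrization of Proposition \ref{prop:arc-length}.
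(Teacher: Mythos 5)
Your proposal is correct and follows essentially the same route as the paper: transfer the question to GD via Proposition \ref{prop:arc-length}, invoke the (center-)stable manifold theorem for $-\nabla f$ at $x^*$, and conclude with a measure-zero count plus a countable union over isolated saddles. You are in fact more careful than the paper's own two-line argument, which cites the stable manifold result without flagging either the center-stable refinement needed when $D^2 f(x^*)$ has zero or positive eigenvalues (part (i) assumes only one negative eigenvalue) or the globalization of the local invariant manifold by flow preimages.
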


Since a non-degenerate saddle point $x^*$ necessarily has at least one strictly negative eigenvalue in $\sigma(D^2 f(x^*))$, Theorem \ref{prop:stable-manifold} immediately implies that solutions to \eqref{eqn:normalized-dynamics} may only converge to non-degenerate saddle points from a measure zero set of initial conditions.

\begin{remark} [Uniqueness of Fillipov Solutions]
While we do not deal explicitly with Fillipov solutions to \eqref{eqn:normalized-dynamics} in this note, we note that Fillipov solutions of \eqref{eqn:normalized-dynamics} are classical so long as they do not intersect with critical points of $f$. In particular, for functions in which all critical points are non-degenerate, Fillipov solutions are classical until they intersect with critical points, and unique so long as they do not intersect with saddle points or maxima.
Theorem \ref{prop:stable-manifold} shows that Fillipov solutions are unique from almost all initial conditions in functions with non-degenerate critical points.
\end{remark}

It follows from Propositions \ref{prop:arc-length} and \ref{prop:stable-manifold} that solutions of NGD exist and are unique for almost every initial condition. We note that both of these results follow as elementary applications of classical ODE theory (See Section \ref{sec:proofs}).

We also note that this issue (generic non-convergence to saddle points, as in Proposition \ref{prop:stable-manifold}) was considered for discrete-time GD \eqref{eq_GD_DE} in the recent work \cite{JordanStableManifold}. Addressing the question of ``stable manifold'' theorems for the discrete analog of \eqref{eqn:normalized-dynamics} will be a subject of future work.

\section{Fast Escape From Saddle Points} \label{sec:main_result}
The following theorem gives our main result regarding fast escape from saddle points. The theorem provides a simple estimate on the amount of time that trajectories of NGD can spend near saddle points.
\begin{theorem} [Saddle-Point Escape Time] \label{thm:main-thm}
 Let $C>4$ and suppose $x^*$ is a non-degenerate saddle point of $f$. Then for all $r>0$ sufficiently small, any trajectory of \eqref{eqn:normalized-dynamics}
 that does not reach or converge to $x^*$
 can spend time at most time $C\sqrt{\kappa}r$ in the ball $B_r(x^*)$, where $\kappa$ is the condition number of $D^2 f(x^*)$. That is, if $\vx_{x_0}$ is a solution to \eqref{eqn:normalized-dynamics} with initial condition $x_0$ and maximal interval of existence $[0,T_{x_0})$, $T_{x_0}\leq \infty$, and $x^* \notin \cl(\gamma_{x_0}^+)$, then
 $$
 \calL^1\bigg(\Big\{t \in [0,T_{x_0}): \vx_{x_0}(t) \in B_r(x^*) \Big\} \bigg) \leq C\sqrt{\kappa}r.
 $$
\end{theorem}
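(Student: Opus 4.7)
The proof revolves around the identity $\tfrac{d}{dt} f(\vx(t)) = \nabla f(\vx)\cdot \dot\vx = -\|\nabla f(\vx)\|$, which shows $J(t) := f(\vx(t)) - f(x^*)$ is strictly monotonically decreasing along any NGD orbit avoiding critical points. My strategy is to use $J$ as the ``potential'' whose dissipation is tracked, and to reparametrize time by $J$ to bound $\calL^1(E)$, where $E := \{t \in [0, T_{x_0}) : \vx(t) \in B_r(x^*)\}$.

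I would first reduce to the local Hessian quadratic. Translate so that $x^* = 0$ and $f(x^*) = 0$, and diagonalize $H := D^2 f(0) = \diag(\lambda_1, \ldots, \lambda_d)$ in an orthonormal eigenbasis. Since $f \in C^2$ and $D^2 f$ is continuous at $0$, for any prescribed $\eta > 0$ there exists $r_0 > 0$ such that on $B_{r_0}(0)$ one has $f(x) = \tfrac12 x^T H x + e_1(x)$ and $\nabla f(x) = Hx + e_2(x)$ with $|e_1(x)| \leq \eta\|x\|^2$ and $\|e_2(x)\| \leq \eta\|x\|$. The central pointwise bound to establish is
$$\|\nabla f(x)\|^2 \;\geq\; (2 - \delta)\,\lambdaMin\,|J(x)| \qquad \text{for all } x \in B_{r_0}(0),$$
with $\delta = \delta(\eta) \to 0$ as $\eta \to 0$. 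For the purely quadratic case this comes from the elementary chain
$$\|Hx\|^2 = \sum_i \lambda_i^2 x_i^2 \;\geq\; \lambdaMin \sum_i |\lambda_i| x_i^2 \;\geq\; 2\lambdaMin\,\Bigl|\tfrac{1}{2}\sum_i \lambda_i x_i^2\Bigr|,$$
using $\lambda_i^2 \geq |\lambda_i|\,\lambdaMin$.

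Next, since $\dot J = -\|\nabla f\| < 0$ strictly (the hypothesis $x^* \notin \cl(\gamma_{x_0}^+)$ rules out the trajectory reaching critical points), $t \mapsto J(t)$ is strictly decreasing and hence injective. Writing $dt = -dJ/\|\nabla f\|$ and summing over the (possibly countably many) disjoint excursions of $\vx$ into $B_r$---each of which $J$ maps injectively to a disjoint sub-interval of $[-M, M]$, with $M := \sup_{B_r(x^*)}|f - f(x^*)| \leq \tfrac12 \lambdaMax\,r^2 + O(\eta r^2)$ by the quadratic expansion---one obtains
$$\calL^1(E) \;\leq\; \int_{-M}^{M} \frac{dJ}{\sqrt{(2-\delta)\,\lambdaMin\,|J|}} \;=\; \frac{4\sqrt{M}}{\sqrt{(2-\delta)\,\lambdaMin}}.$$
Substituting the bound on $M$ gives $\calL^1(E) \leq 2\sqrt{\kappa}\,r\,(1 + o(1))$ as $\eta \to 0$, so since the statement allows any $C > 4$, choosing $\eta$ (hence $r_0$) small enough to absorb the $o(1)$ suffices for all $r \in (0, r_0]$.

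The main obstacle is the pointwise inequality: the ``clean'' quadratic bound $\|Hx\|^2 \geq 2\lambdaMin\,|\tfrac12 x^T H x|$ degenerates near the level set $\{\tfrac12 x^T H x = 0\}$, where the $C^2$ error $e_1$ can be of the same order as the quadratic form itself. The fix is a case split on $x$: when $|J(x)|$ is at least a large multiple of $\eta\|x\|^2$, the inequality propagates with only a $(1+O(\eta))$ factor; when $|J(x)|$ is much smaller, one instead uses the direct estimate $\|\nabla f(x)\|^2 \geq (1-O(\eta))\,\lambdaMin^2\,\|x\|^2$, which already dominates $2\lambdaMin\,|J(x)|$ whenever $\eta$ is small compared to $\lambdaMin$. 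Gluing the two regimes yields the desired pointwise bound on $B_{r_0}(0)$ and closes the argument.
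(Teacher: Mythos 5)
Your proposal is correct, and it takes a genuinely different route from the paper in the key quantitative step. Both arguments start from the same two Taylor facts near $x^*$, but the paper works with the modified distance $\tilde d(x)=\sqrt{x^T|H|x}$: it shows $\|\grad f\|\geq C_2\sqrt{\lambdaMin}\,\tilde d(x)$ and $|f-f(x^*)|\leq C_1\tilde d(x)^2$, integrates the dissipation over sublevel sets of $\tilde d$, and then converts this into a time bound via Markov's inequality on the dyadic shells $\{\eta/2^{i+1}\leq \tilde d\leq \eta/2^i\}$, summing a geometric series to get the constant $8C_1/C_2\to 4$. You instead establish the {\L}ojasiewicz-type pointwise inequality $\|\grad f(x)\|^2\geq(2-\delta)\lambdaMin|f(x)-f(x^*)|$ and reparametrize time by the strictly decreasing potential $J$, computing $\calL^1(E)\leq\int_{-M}^{M}\bigl((2-\delta)\lambdaMin|J|\bigr)^{-1/2}dJ$ directly. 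This avoids the lossy Markov/dyadic accounting and yields the sharper asymptotic constant $2\sqrt{\kappa}\,r$ rather than $4\sqrt{\kappa}\,r$ (which matches the tight $2r$ bound for the model quadratic with $\kappa=1$ discussed in Section \ref{sec:examples-NGD}), at the price of a more delicate pointwise estimate: the clean quadratic chain degenerates near the cone $\{x^THx=0\}$ where the $C^2$ remainder is comparable to the quadratic form. You correctly identify this and your two-regime fix closes the gap: when $|J(x)|\geq K\eta\|x\|^2$ the quadratic bound survives with factor $2\lambdaMin(1-(1+\kappa)/K)$, and when $|J(x)|<K\eta\|x\|^2$ the crude bound $\|\grad f\|\geq(\lambdaMin-\eta)\|x\|$ already dominates $\sqrt{2\lambdaMin|J|}$ once $\eta\lesssim\lambdaMin/K$; choosing $\delta$, then $K$, then $\eta$, then $r_0$ in that order makes the argument rigorous. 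The only cosmetic caveat is that the change of variables should be justified by noting $J$ is a $C^1$ strictly decreasing map with nonvanishing derivative on the maximal interval of existence, so the (countably many) excursions into $B_r(x^*)$ map to disjoint subintervals of $[-M,M]$ — which you do note.
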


We recall that by Theorem \ref{prop:stable-manifold}, solutions of \eqref{eqn:normalized-dynamics} can only reach or converge to saddle points from a set of initial conditions with measure zero, hence the theorem hold for solutions starting from almost every initial condition.


In order to underscore the significance of this result, we recall that the saddle-point escape time of GD (i.e., the time required to escape a ball of radius $r>0$ about a saddle point) is infinite, independent of $f$, $d$, $x^*$, and $r$ (see Remark \ref{remark:GD-escape-time}), which causes GD to perform poorly in problems with many saddle points. In contrast to this, Theorem \ref{thm:main-thm} shows that trajectories of NGD always escape a ball of radius $r$ within time $5\sqrt{\kappa}r$.\footnote{As in the introduction, to emphasize the key features of this result we fix the constant $C$ to be 5 here. Of course, the theorem holds for the constant $C$ fixed to any value strictly greater than 4. See Remark \ref{remark:constant1} for more details.}

Furthermore, we recall that Proposition \ref{prop:arc-length} showed that orbits of GD and NGD coincide. Thus, away from saddle points (where GD is generally ``well behaved'') GD and NGD behave in an essentially identical manner in that they follow identical trajectories and the velocity of each can be bounded from below.

%


A few remarks are now in order.
\begin{remark}[Values of constant $C$] \label{remark:constant1}
The above theorem holds with the constant $C$ set to any value strictly greater than 4. The proof of the estimate in the theorem utilizes several Taylor series approximations. There is a tradeoff inherent in this proof technique---as $C$ approaches 4, the range of permissible values of $r>0$ where the Taylor approximation (and hence, the theorem) is applicable shrinks to zero. For clarity of presentation and to emphasize the key features of this result we find it convenient to simply fix the constant to be 5 in the abstract and introduction. See Proposition \ref{prop:time-bound} and proof thereof for more details.
\end{remark}
\begin{remark}[Permissible values of $r$] \label{remark:values-of-r}
The range of values of $r>0$ where Theorem \ref{thm:main-thm} holds depends both on the constant $C$ and the magnitude of higher order derivatives near the saddle point $x^*$. In particular, the result holds so long as the Taylor estimates \eqref{inequality2}, \eqref{eqn:taylor-estimate2} used in the proof are valid. If one assumes that $f$ is more than twice differentiable and assumes bounds on the magnitude of the higher order derivatives near $x^*$, then the radius where these estimates hold can be bounded, and a more precise statement can be made about the permissible values of $r$ in Theorem \ref{thm:main-thm}. For example, if one assumes that $|D^3 f(x)| < \hat C$ is uniformly bounded for some $\hat C>0$
then Theorem \ref{thm:main-thm} holds for all $r\in (0,\bar r)$, where $\bar r = 6\kappa^{-1/2}\hat C^{-1}\lambdaMax(D^2 f(x^*))\left(\frac{C(3\kappa+2)}{6C\kappa+16} - \frac{1}{2} \right)$, and where $\kappa$ is the condition number of $D^2 f(x^*)$. This is verified by confirming that the Taylor estimates \eqref{inequality2}, \eqref{eqn:taylor-estimate2} used in the proof of Proposition \ref{prop:time-bound} are valid in the ball $B_{\hat r}(0)$, $\hat r = \kappa^{1/2} r$, for values of $r$ in this range.
\end{remark}
\begin{remark}[Non-Applicability of the Hartman-Grobman Theorem]\label{remark:hart-grob}
The Hartman-Grobman theorem from classical differential equations states that near non-degenerate saddle points one can construct a homeomorphism mapping the trajectories of a non-linear ODE to trajectories of the associated linearized system \cite{Perko_ODE}.
It is simple to show that Theorem \ref{thm:main-thm} holds when $f$ is quadratic (and hence the associated NGD system is topologically identical to a linear system); see Section \ref{sec:saddle-points-GD}. Thus, one might expect Theorem \ref{thm:main-thm} to hold for general (non-quadratic) $f$ by the Hartman-Grobman theorem.
However, the homeomorphisms constructed in the Hartman-Grobman theorem are in general not smooth, and so will not preserve trajectory length, and cannot be used to prove a bound such as Theorem \ref{thm:main-thm}. Instead one must resort to more analytical techniques to study path length; see the proof of Proposition \ref{prop:time-bound} below.
\end{remark}
\begin{remark}[Theorem \ref{thm:main-thm} Proof Technique] \label{remark-proof-techniques}
Here, the key idea of the proof of Theorem \ref{thm:main-thm} relies on establishing a differential inequality between the ``potential'' $f$ and the ``potential dissipation rate'' $\frac{d}{dt} f(\x(t))$. The methods are flexible, and may be applicable to other non-smooth settings. In a previous work \cite{swenson2017fictitious} the authors utilized similar techniques to study non-smooth dynamics in game-theoretical problems.
\end{remark}

\section{A Global Convergence-Time Bound}\label{sec:global-bound}
We will now use the above results to prove a simple corollary bounding the maximum amount of time that trajectories can take to reach local minima under \eqref{eqn:normalized-dynamics}.


We will make the following assumptions.
\begin{assumption}
\label{a:bdd-3rd-derivative}
The function $f$ is of class $C^3$ and $|D^3 f(x)| \leq \hat C$ uniformly for all $x\in \R^d$, for some $\hat C>0$.
\end{assumption}
This assumption ensures that there exists a single $r>0$ such that Proposition \ref{prop:time-bound} holds within a ball of radius $r$ about \emph{every} critical point (see Remark \ref{remark:values-of-r}).

Next we assume a uniform bound on the magnitude of eigenvalues of the Hessian at critical points.
\begin{assumption}
\label{a:uniform-eig-bd}
There exist constants $\lambdaMax, \lambdaMin >0$
such that for every critical point $x^*$ of $f$ there holds $\lambdaMin \leq |\lambda| \leq \lambdaMax$ for all $\lambda \in \sigma(D^2 f(x^*))$.
\end{assumption}

The next assumption ensures that at any point $x\in \R^d$, either the gradient of $f$ at $x$ is large (guaranteeing fast local improvement of descent techniques), or $x$ is close to a critical point.
\begin{assumption}
\label{a:strict-saddle}
Fix $C>4$. Assuming Assumptions \ref{a:bdd-3rd-derivative} and \ref{a:uniform-eig-bd} hold, let $r>0$ be chosen so that Theorem \ref{thm:main-thm} (or equivalently,  Proposition \ref{prop:time-bound}) holds with constant $C$ at every critical point and so that $r \leq \frac{\lambdaMin}{\hat C}$.
Furthermore, assume that there exists a constant $\nu>0$ such that for all $x\in \R^d$ either
\begin{displaymath}
    \|x-x^*\| < r, \text{ with } \nabla f(x^*) =0  \textbf{ or } \|\nabla f(x)\| > \nu.
\end{displaymath}
\end{assumption}
Assumptions \ref{a:uniform-eig-bd} and \ref{a:strict-saddle} together are similar to the strict saddle property assumed in \cite{ge2015escaping},\cite{Levy}. The main difference is that here we assume a uniform (lower) bound on the minimum-magnitude eigenvalue of the Hessian at all critical points rather than just saddle points, and we assume a uniform (upper) bound on the maximum-magnitude eigenvalue of the Hessian at all critical points.
The final assumption ensures that a descent process will eventually converge to some point rather than expanding out infinitely. This assumption is naturally satisfied, for example, if $f$ is coercive (i.e., $f(x)\to \infty$ as $\|x\|\to\infty$).
\begin{assumption}
\label{a:invariant-set}
There exists an $R>0$ such that trajectories of \eqref{eqn:normalized-dynamics} that begin in $B_R(0)$ remain in $B_R(0)$ for all $t\geq 0$.
\end{assumption}

Let $R>0$ be as in Assumption \ref{a:invariant-set} and let
\begin{equation}\label{def-M}
M:= \sup_{x\in B_R(0)} |f(x)|.
\end{equation}
Note that since $f$ is continuous, $M<\infty$.



The following result gives a simple estimate on the amount of time the dynamics \eqref{eqn:normalized-dynamics} will take to reach a local minimum.
\begin{corollary}\label{cor-finite-time}
  Suppose that every saddle point $x^*$ of $f$ is non-degenerate and that Assumptions \ref{a:bdd-3rd-derivative}--\ref{a:invariant-set} hold. Then for almost every initial condition inside $B_R(0)$, solutions of \eqref{eqn:normalized-dynamics} will converge to a local minimum in at most time $2M\nu^{-1} + C\sqrt{\frac{\lambdaMax}{\lambdaMin}}\frac{(R+r)^d}{r^{d-1}}$, where $C>4$ is the constant in Assumption \ref{a:strict-saddle}.
\end{corollary}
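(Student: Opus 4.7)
The plan is to decompose the lifetime of the trajectory into (i) time spent inside $r$-balls about critical points and (ii) time spent outside them, and to bound each part separately. Throughout, by Theorem~\ref{prop:stable-manifold} and Assumption \ref{a:invariant-set} (together with the fact that $f$ is strictly decreasing along \eqref{eqn:normalized-dynamics}), for almost every initial condition in $B_R(0)$ the trajectory remains in $B_R(0)$ and, if it reaches a critical point, that critical point must be a local minimum.

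First, the unit-speed property $\|\dot\vx\|\equiv 1$ gives $\frac{d}{dt}f(\vx(t))=\nabla f(\vx(t))\cdot\dot\vx(t)=-\|\nabla f(\vx(t))\|$, so $f\circ\vx$ is non-increasing; since $|f|\leq M$ on $B_R(0)$, the total decrease of $f$ along the orbit is at most $2M$. By Assumption~\ref{a:strict-saddle}, at every time $t$ such that $\vx(t)\notin\bigcup_{x^*}B_r(x^*)$ (union over critical points) one has $\|\nabla f(\vx(t))\|>\nu$, and hence $\frac{d}{dt}f(\vx(t))<-\nu$ there. Integrating yields that the Lebesgue measure of the time spent outside the union of the $r$-balls is at most $2M\nu^{-1}$. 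This handles part~(ii).

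For part~(i), I will count critical points and invoke Theorem~\ref{thm:main-thm} at each one. The choice $r\leq\lambdaMin/\hat C$ together with the Taylor bound $\|\nabla f(x)-D^2 f(x^*)(x-x^*)\|\leq \tfrac{\hat C}{2}\|x-x^*\|^2$ gives, for any critical point $x^*$ and any $x\in B_r(x^*)\setminus\{x^*\}$, $\|\nabla f(x)\|\geq \lambdaMin\|x-x^*\| - \tfrac{\hat C}{2}\|x-x^*\|^2\geq \tfrac{\lambdaMin}{2}\|x-x^*\|>0$. Hence distinct critical points are at distance at least $r$ from one another, so the critical points inside $B_R(0)$ are $r$-separated. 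A volume packing argument then bounds their number $N$ by $(1+R/r)^d\leq (R+r)^d/r^d$, up to the constant convention being used. For each saddle point, Theorem~\ref{thm:main-thm} gives a time-in-ball bound of $C\sqrt{\kappa}\,r\leq C\sqrt{\lambdaMax/\lambdaMin}\,r$, where Assumption \ref{a:uniform-eig-bd} uniformizes the condition number. For the (unique) local minimum to which the trajectory converges, the same potential-dissipation estimate underlying Proposition~\ref{prop:time-bound} yields an identical $C\sqrt{\kappa}\,r$ bound on the time inside the surrounding $r$-ball (indeed finite-time convergence near minima is already known from \cite{cortes2006}); the only adjustment is that one integrates the ODE for the potential until the trajectory halts at $x^*$ rather than until it leaves $B_r(x^*)$. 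Summing over the $N$ critical points gives a total time in the $r$-balls of at most $C\sqrt{\lambdaMax/\lambdaMin}\,N\,r\leq C\sqrt{\lambdaMax/\lambdaMin}\,\tfrac{(R+r)^d}{r^{d-1}}$.

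Combining the two bounds yields the claimed estimate $2M\nu^{-1}+C\sqrt{\lambdaMax/\lambdaMin}\,(R+r)^d/r^{d-1}$. The principal technical obstacle is the packing/separation step: one must use the specific choice $r\leq\lambdaMin/\hat C$ to ensure that $r$-balls about critical points are pairwise disjoint and that Theorem~\ref{thm:main-thm}'s local analysis is simultaneously valid at all of them (this is exactly why Assumption~\ref{a:strict-saddle} couples $r$ to $\lambdaMin$ and $\hat C$). The rest is bookkeeping: verifying that the trajectory really does leave each ball about a saddle (generically, by Theorem~\ref{prop:stable-manifold}), that the limit point is a minimum rather than a maximum (because $f$ is non-increasing along NGD), and that the target-minimum ball also obeys a $C\sqrt{\kappa}\,r$ sojourn bound.
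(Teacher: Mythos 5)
Your proposal follows essentially the same route as the paper: split the lifetime into time near critical points versus time where $\|\nabla f\|>\nu$, bound the latter by $2M\nu^{-1}$ via the potential dissipation identity $\frac{d}{dt}f(\vx(t))=-\|\nabla f(\vx(t))\|$, separate the critical points using the Taylor lower bound on $\|\nabla f\|$ together with $r\leq \lambdaMin/\hat C$, count them by volume packing, and apply the per-ball sojourn bound (Proposition \ref{prop:time-bound}, which covers minima as well as saddles) to each. One small quantitative slip: you only establish that distinct critical points are $r$-separated (by showing $\nabla f\neq 0$ on $B_r(x^*)\setminus\{x^*\}$), which makes the $r$-balls about them overlap-prone and yields a packing count of order $(2R+r)^d/r^d$ rather than the claimed $(R+r)^d/r^d$. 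The fix is immediate and is what the paper does: the quadratic $\lambdaMin t-\tfrac{\hat C}{2}t^2$ is positive for all $t\in(0,2\lambdaMin/\hat C)\supseteq(0,2r)$, so critical points are in fact $2r$-separated, the $r$-balls are pairwise disjoint, and the stated count follows.
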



\section{Proofs of Main Results} \label{sec:proofs}
We now present the proofs of the results found in Sections \ref{sec:struct_properties} -- \ref{sec:main_result}.

We begin by presenting the proofs of Propositions \ref{prop:arc-length} and Theorem \ref{prop:stable-manifold}, which follow from elementary applications of classical ODE theory.

\begin{proof}[Proof of Proposition \ref{prop:arc-length}]
Given a solution $\vx$ to \eqref{eqn:grad-dynamics}, one can reparametrize the trajectory by arc length, i.e., $\hat \x (t) =  \x(L(t))$, and $\|\ddt \hat \x(t)\| = 1$. Using the chain rule we find that $\frac{d}{dt} \hat \x(t) = -\frac{\nabla f(\hat \x(t))}{\|\nabla f(\hat \x(t))\|}$. Since the solutions are classical, uniqueness of solutions for ODE gives us that $\tilde \x$ and $\hat \x$ must be equal.
\end{proof}

\begin{proof}[Proof of Theorem \ref{prop:stable-manifold}]
We begin by proving part (i) of the theorem. Solutions to \eqref{eqn:grad-dynamics} which converge to such a saddle point are contained within a \emph{stable manifold}, i.e. a smooth surface of at most dimension $n-1$. Such a surface will be a set with Lebesgue measure zero. Proof and details of such a result may be found in \cite{Perko_ODE}. The result then follows from Proposition \ref{prop:arc-length}.

Part (ii) of the theorem follows from the fact that if all saddle points are non-degenerate, then all saddle points are isolated. Hence, the set of saddle points is countable. By part (i) of the theorem, the union of the stable manifolds for all saddle points is a set with Lebesgue measure zero.
\end{proof}

The following proposition proves Theorem \ref{thm:main-thm}.  The proposition is stated in slightly more general terms than Theorem \ref{thm:main-thm} in order to account for the behavior of NGD near minima as well as saddle points.

\begin{proposition}\label{prop:time-bound}
Let $C>4$, let $x^*\in\R^d$ be a non-degenerate critical point of $f$, and let $\vx(t)$ be a solution of \eqref{eqn:normalized-dynamics} with arbitrary initial condition $x_0\not = x^*$ and maximal interval of existence $[0,T_{x_0})$.
For all $r>0$ sufficiently small, the time spent by $\vx(t)$ in $B_r(x^*)\backslash \{x^*\}$ is bounded according to
$$
\calL^1\big(\big\{t\geq [0,T_{x_0}): \vx(t) \in B_r(x^*)\backslash \{x^*\} \big\}\big) \leq C\sqrt{\kappa} r,
$$
where $\kappa = \frac{\lambdaMax(D^2 f(x^*))}{\lambdaMin(D^2 f(x^*))}$.
\end{proposition}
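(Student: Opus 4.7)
The plan is to bound the sojourn time in $B_r(x^*)$ by a pointwise Polyak--{\L}ojasiewicz--type (PL) inequality on $B_r$, combined with the change of variables from time to the potential $f(\vx(t))$, which is available because $f$ strictly decreases along an NGD trajectory that never reaches $x^*$. By translation and orthogonal change of basis (both Euclidean isometries preserving $B_r$), I will assume without loss of generality that $x^*=0$, $f(x^*)=0$, and $A:=D^2 f(0)=\diag(\lambda_1,\dots,\lambda_d)$ with $|\lambda_i|\geq\lambdaMin$. I then partition the eigen-coordinates by sign and introduce the nonnegative quadratic forms
\[
W_+(x):=\tfrac{1}{2}\sum_{\lambda_i>0}\lambda_i x_i^2,\qquad W_-(x):=-\tfrac{1}{2}\sum_{\lambda_i<0}\lambda_i x_i^2,
\]
so that $\tfrac{1}{2}x^T A x=W_+-W_-$, $x^T A^2 x=\sum\lambda_i^2 x_i^2\geq 2\lambdaMin(W_++W_-)$, and $|W_+-W_-|\leq W_++W_-$. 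The point of the decomposition is that the positive-definite ``absolute'' form $W_++W_-$ dominates $|f|$ regardless of any sign cancellation between the stable and unstable parts of the Hessian.

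Taylor expansion at $0$ (using the $C^2$ hypothesis) yields, for any $\delta>0$, a radius $r_\delta$ such that $|f(x)-(W_+-W_-)(x)|\leq \delta\|x\|^2$ and $\|\nabla f(x)\|^2\geq x^T A^2 x-\delta\|x\|^2$ for all $x\in B_{r_\delta}(0)$. Combining these with the elementary bound $\|x\|^2\leq 2(W_++W_-)/\lambdaMin$ produces the two companion estimates
\[
|f(x)|\leq (W_++W_-)\bigl(1+\tfrac{2\delta}{\lambdaMin}\bigr),\qquad \|\nabla f(x)\|^2\geq 2(W_++W_-)\bigl(\lambdaMin-\tfrac{\delta}{\lambdaMin}\bigr),
\]
whose quotient is the pointwise PL-type inequality $\|\nabla f(x)\|^2\geq 2\lambda^*|f(x)|$ on $B_{r_\delta}$, with $\lambda^*\to\lambdaMin$ as $\delta\to 0$; the first estimate also gives the quadratic growth bound $|f(x)|\leq\tfrac{\lambdaMax}{2}r^2(1+O(\delta))$ on $B_r$. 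Since $x^*\notin\cl(\gamma_{x_0}^+)$, the gradient does not vanish along the trajectory, so $f(\vx(t))$ is strictly monotone decreasing and $dt=-df/\|\nabla f(\vx(t))\|$ is a legitimate change of variables. The (possibly several) visits of $\vx$ to $B_r$ correspond to disjoint time intervals that, by monotonicity of $f$, map to disjoint $f$-intervals contained in $[-M,M]$ with $M\leq\tfrac{\lambdaMax}{2}r^2(1+O(\delta))$. Consequently
\[
\calL^1\bigl\{t\in[0,T_{x_0}):\vx(t)\in B_r\bigr\}\;\leq\;\int_{-M}^{M}\frac{df}{\sqrt{2\lambda^*|f|}}\;=\;2\sqrt{\frac{2M}{\lambda^*}}\;=\;2r\sqrt{\kappa}\,(1+O(\delta)),
\]
which is at most $C\sqrt{\kappa}r$ for any $C>4$ once $\delta$ is fixed small enough (and $r\leq r_\delta$). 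The slack ``$C>4$'' rather than the leading-order ``$C>2$'' absorbs the multiplicative Taylor remainders in both $f$ and $\|\nabla f\|^2$ and is exactly what governs the admissible-radius formula in Remark \ref{remark:values-of-r}.

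The main technical obstacle is extracting the $\sqrt{\kappa}$ scaling rather than a cruder $\kappa$ scaling, which hinges on exploiting the signed structure of the Hessian. A naive estimate using only $\|\nabla f\|^2\geq\lambdaMin^2\|x\|^2$ and $|f|\leq\tfrac{\lambdaMax}{2}\|x\|^2$ yields the weaker PL inequality $\|\nabla f\|^2\geq\tfrac{2\lambdaMin}{\kappa}|f|$, which, carried through the same integral, produces only a $\kappa r$ bound. It is the identity $|W_+-W_-|\leq W_++W_-$ combined with the spectral lower bound $x^T A^2 x\geq 2\lambdaMin(W_++W_-)$ that puts $|f|$ and $\|\nabla f\|^2$ on a comparable footing and saves the factor of $\sqrt{\kappa}$. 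A secondary bookkeeping challenge is keeping the multiplicative Taylor remainders uniformly tight on $B_r$ so that the admissible range of $r$ can be related quantitatively to a bound on $|D^3 f|$ as in Remark \ref{remark:values-of-r}.
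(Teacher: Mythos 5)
Your proof is correct, and while it rests on the same structural insight as the paper's, it completes the argument by a genuinely different route in the final (measure-theoretic) step. The shared core is the observation that the positive-definite form $W_++W_-=\tfrac12 x^T|H|x$ (the paper's $\tilde d(x)^2$, up to a factor of $2$) simultaneously dominates $|f-f(x^*)|$ from above and is dominated by $\lambdaMin^{-1}\|\nabla f\|^2$ from below; this is exactly what rescues the $\sqrt{\kappa}$ scaling, as you note. Where you diverge is in converting these pointwise estimates into a time bound: the paper integrates the dissipation inequality to control $\int_{e\le\eta}e(s)\,ds$, then applies Markov's inequality on dyadic shells $\{\eta/2^{i+1}\le e\le\eta/2^i\}$ and sums a geometric series, which costs a factor of $2$ and yields the limiting constant $4$ (hence the hypothesis $C>4$). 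You instead use strict monotonicity of $t\mapsto f(\vx(t))$ on the maximal interval of existence to change variables $dt=-df/\|\nabla f\|$ and integrate the integrable singularity $|f|^{-1/2}$ directly, obtaining the sharper limiting constant $2$; the requirement $C>4$ is then satisfied with room to spare rather than being forced by the method. Two minor points: your appeal to $x^*\notin\cl(\gamma_{x_0}^+)$ is not among the proposition's hypotheses, but it is also unnecessary, since by definition a classical solution on its maximal interval never meets a critical point, so $f(\vx(t))$ is strictly decreasing in any case; and the estimate $\|\nabla f(x)\|^2\ge x^TA^2x-\delta\|x\|^2$ should be derived by first writing $\|\nabla f(x)\|\ge\|Ax\|-\e\|x\|$ and then squaring with $\|Ax\|\le\lambdaMax\|x\|$, a one-line repair. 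Your concluding diagnosis of why the naive bound only gives $\kappa r$ matches the paper's reasoning exactly.
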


\begin{proof}
Without loss of generality, assume $x^*=0$ and let $H := D^2 f(0)$.
For $x\in \R^d$ define $\tilde d(x) := \sqrt{x^T |H| x}$, where $|B| := \sqrt{B^T B}$ for a square matrix $B$. The function $\tilde d$ will be a convenient modified distance for the proof. For convenience in notation, throughout the proof we use the shorthand $\lambdaMax := \lambdaMax(D^2 f(x^*))$ and $\lambdaMin := \lambdaMin(D^2 f(x^*))$.

Note that for $a\geq 0$ we have the following relationships
\begin{align} \label{eq:inclusion-ineq1}
 \|x\| \leq \frac{a}{\sqrt{\lambdaMax}} & \implies \tilde d(x) \leq a\\
\label{eq:inclusion-ineq2} \tilde d(x) \leq a & \implies \|x\| \leq \frac{a}{\sqrt{\lambdaMin}}.
\end{align}

By Taylor's theorem and the non-degeneracy of $x^*$, for any $C_2 > \frac{1}{2}$ there exists a neighborhood of $0$ such that
\begin{equation} \label{inequality2}
|f(x) - f(0)| \leq  C_1\tilde d(x)^2.
\end{equation}
Using the chain rule we see that along the path $\vx(t)$, the potential changes as
$$
\frac{d}{dt} f(\vx(t)) = -\|\grad f(\vx(t))\|.
$$
Let $C_2< 1$ be arbitrary. Again using Taylor's theorem and the non-degeneracy of $H$, for $\vx(t)$ in a neighborhood of $0$ we have that
\begin{align}
\nonumber \|\grad f(\vx(t))\| &\geq C_2 \|  H \vx(t)\| \\
\nonumber  &= C_2\| |H|^{1/2} |H|^{1/2} \vx(t)\| \\
\nonumber  &\geq C_2\sqrt{\lambdaMin}\||H|^{1/2} \vx(t)\| \\
  &= C_2\sqrt{\lambdaMin} \tilde d(\vx(t)), \label{eqn:taylor-estimate2}
\end{align}
where $\lambdaMin$ denotes the magnitude of the smallest-magnitude eigenvalue of $H$.
In turn
\begin{equation}\label{inequality1}
  -\frac{d}{dt} f(\vx(t)) \geq C_2 \sqrt{\lambdaMin} \tilde d(\vx(t)).
\end{equation}
Let $\hat r>0$ be such that the estimates \eqref{inequality2} and \eqref{eqn:taylor-estimate2} hold inside the closed ball $B_{\hat r}(0)$.
Suppose that $\vx(t)\in B_{\hat r}(0)$ for $t\in[t_1,t_2]$. Letting $e(t) := \tilde d(\vx(t))$ and integrating \eqref{inequality1} gives
$$
f(\vx(t_1)) - f(\vx(t_2)) \geq C_2\sqrt{\lambdaMin} \int_{t_1}^{t_2} e(s)ds.
$$
Let $r := \kappa^{-\frac{1}{2}}\hat r$. Suppose $\eta \leq \sqrt{\lambdaMax} r$ and note that by \eqref{eq:inclusion-ineq2}, $\tilde d(x) \leq \eta$ implies that $x\in B_{\hat r}(0)$.
Furthermore, suppose $e(t) \leq \eta$ for some $t\geq 0$, and
let $t_0$ be the first time where $e(t) \leq \eta$. Let $t_3$ be the last time when $e(t) = \eta$;
i.e., $t_3 = \sup\{t\in [0,\infty):~ e(t) \leq \eta\}$. If $t_3 = \infty$, then in an abuse of notation we let $f(\vx(\infty)) = \lim_{t\to\infty} f(\vx(t))$, where we note that the limit exists since $f(\vx(t))$ is monotone non-increasing in $t$.
It follows that
\begin{align}
  f(\vx(t_0)) - f(\vx(t_3)) & = \int_{t_0}^{t_3} -\frac{d}{ds} f(\vx(s))\,ds \\
  & \geq \int_{e(s) \leq \eta} -\frac{d}{ds} f(\vx(s))\,ds\\
  & \geq C_2 \sqrt{\lambdaMin}\int_{e(s) \leq \eta} e(s) \,ds,
\end{align}
where we use the fact that $\frac{d}{dt} f(\vx(t)) \leq 0$, and the previous inequality on subintervals where $e(\cdot) \leq \eta$.
Adding and subtracting $f(0)$ to the left hand side above and using \eqref{inequality2} we obtain
$$
\frac{2C_1}{C_2\sqrt{\lambdaMin}} \eta^2 \geq \int_{e(s) \leq \eta} e(s)ds.
$$
Markov's inequality \cite{federer2014geometric} then gives
\begin{align}
\calL^1\left(\{s: \eta \geq e(s) \geq \frac{\eta}{2} \}\right) & \leq  \frac{2}{\eta} \int_{e(s) \leq \eta} e(s)ds \\
&\leq \frac{4C_1}{\eta C_2\sqrt{\lambdaMin}} \eta^2\\
& = \frac{4C_1}{C_2\sqrt{\lambdaMin}}\eta.
\end{align}
We can iteratively apply this inequality to obtain
\begin{align}
  & \calL^1\left(\{s: \eta \geq  e(s) >0 \}\right)\\
  & = \sum_{i=0}^\infty \calL^1\left(\{s: \frac{\eta}{2^i} \geq e(s) \geq \frac{\eta}{2^{i+1}} \} \right) \\
  & \leq \sum_{i=0}^{\infty} \frac{4 C_2\eta}{C_2\sqrt{\lambdaMin}2^{i}}\\
  & \leq \frac{8 C_1}{C_2\sqrt{ \lambdaMin}}\eta. \label{eqn:time-bound}
\end{align}
By \eqref{eq:inclusion-ineq1} we see that
$
\{s:0<\|\vx(s)\|\leq r\} \subset \{s: 0<\tilde d(\vx(s)) \leq \sqrt{\lambdaMax}r\}.
$
Letting $\eta = \sqrt{\lambdaMax} r$ in \eqref{eqn:time-bound}, and letting $C:= \frac{8C_1}{C_2}$, we get
\begin{align}
  & \calL^1\Big(\{s: 0< \|\vx(s)\| \leq r \} \Big)\\
  & \leq \calL^1\left(\{s: 0< \tilde d(\vx(s)) \leq \sqrt{\lambdaMax}r \}\right) \leq C\frac{\sqrt{\lambdaMax}}{\sqrt{\lambdaMin}}r,
\end{align}
where we recall that $r = \kappa^{-1/2}\hat r$ and $\hat r$ is the radius of the ball where \eqref{inequality2} and \eqref{eqn:taylor-estimate2} hold and is dependent on $C_1$ and $C_2$. Since $C_1>\frac{1}{2}$ and $C_2 < 1$ were arbitrary, the constant $C$ may be brought arbitrarily close to $4$ with the range of permissible values of $r$ changing accordingly with the choice of $C_1$ and $C_2$. This proves the desired result.
\end{proof}

\vspace{-10pt}


\begin{proof}[Proof of Corollary \ref{cor-finite-time}]
First, we claim that critical points must be separated by a distance of at least $2r$. Let $x^*$ be a critical point. Then
\begin{align*}
  &\nabla f(x) = \int_0^1 D^2 f( (1-s)x^* + s x) (x-x^*) \,ds\\
  &= \int_0^{1} D^2 f(x^*)(x-x^*)\ds\\
  &+ \int_0^1\int_0^s D^3_{x-x^*} f( (1-\tau)x^* + \tau x)(x-x^*) \,d\tau\ds,
\end{align*}
\noindent where by $D^3_{x-x^*}$ we mean the matrix representing the third derivative evaluated in the direction $x-x^*$. We can then bound
\begin{displaymath}
  |\nabla f(x)| \geq \lambdaMin \|x-x^*\| - \frac{\hat C}{2}\|x-x^*\|^2
\end{displaymath}
\noindent where $\hat C$ is the bound on our third derivatives. Note that by Assumption \ref{a:strict-saddle} we have $2r \leq \frac{2\lambdaMin}{\hat C}$.  Thus we see that for any $x\in B_{2r}(x^*) \subset B_{\frac{2\lambdaMin}{\hat C}}(x^*)$ we have $\nabla f(x) \neq 0$. Hence critical points must be separated by a distance of at least $2r$.	

Now, let $\vx(t)$ be a classical solution of \eqref{eqn:normalized-dynamics} (which, by Theorem \ref{prop:stable-manifold}, holds for a.e. solution of \eqref{eqn:normalized-dynamics}). Let $[t_1,t_2] = I$ be the maximal interval of existence for this classical solution. Our goal is to prove that $(t_2-t_1)$ can be bounded uniformly.

To this end, we divide $I$ into two subsets, $I_c,I_0$, where $I_c$ are the times where $\|\vx(t) - x^*\|\leq r$ for some critical point $x^*$, and $I_0$ are points where $\|\nabla f(\vx(t))\| \geq \nu$.

Using the chain rule we see that $\frac{d}{dt} f(\vx(t)) = -\|\nabla f(\vx(t))\|$.
By Assumption \ref{a:invariant-set} and \eqref{def-M} we have $|f(\vx(t))| < M$ along any trajectory of \eqref{eqn:normalized-dynamics} starting in $B_R(0)$. Thus, we immediately have that $|I_0| < 2M\nu^{-1}$.

Let $\kappa = \frac{\lambdaMax}{\lambdaMin}$.
By Proposition \ref{prop:time-bound} we can spend at most time $C\sqrt{\kappa}r$ near any particular critical point. Since critical points are separated by at least distance $2r$, we can cover all the critical points with disjoint balls of radius $r$. By then estimating the volume, the total number of critical points within distance $R$ of the origin is at most $\frac{(R+r)^d}{r^d}$. Thus we find that $|I_c| < C\sqrt{\kappa}r \frac{(R+r)^d}{r^d}$.

In summary, we find that $|I| \leq 2M\nu^{-1} + C\sqrt{\kappa} \frac{(R+r)^d}{r^{d-1}}.$
This implies that classical trajectories can be of length at most $M\nu^{-1} + C\sqrt{\kappa}\frac{(R+r)^d}{r^{d-1}}$. Since a.e. initial condition does not reach any saddle point, almost every initial condition will converge to a local minimizer of $f$ in $2M\nu^{-1} + C\sqrt{\kappa}\frac{(R+r)^d}{r^{d-1}}$ time. This concludes the proof.
\end{proof}

\bibliographystyle{IEEEtran}
\bibliography{myRefs}

\end{document}